\documentclass[a4paper,10pt]{article}
\usepackage{latexsym,amsmath,amsthm,amssymb}
\usepackage{a4wide}
\usepackage{hyperref}
\usepackage{marginnote}
\usepackage{color}
\usepackage{cite}
\hypersetup{
pdftitle={improved MT in Hn}   
pdfauthor={Van Hoang Nguyen},
colorlinks = true,
linkcolor = magenta,
citecolor = blue,
}

\theoremstyle{plain}
\newtheorem{theorem}{Theorem}[section]



\newtheorem{proposition}[theorem]{Proposition}
\newtheorem{lemma}[theorem]{Lemma}

\theoremstyle{definition}

\theoremstyle{remark}


\renewcommand{\thefootnote}{\arabic{footnote}}

\def\R{\mathbb R}


\def\al{\alpha}
\def\om{\omega}
\def\Om{\Omega}
\def\be{\beta}

\def\si{\sigma}
\def\lam{\lambda}
\def\na{\nabla}
\def\la{\langle} 
\def\ra{\rangle} 
\def\lt{\left}
\def\rt{\right}

\def\i0i{\int_0^\infty}

\def\Vol{\text{Vol}}

\def\B{\mathbb B}
\def\H{\mathbb H}


\numberwithin{equation}{section}


\title{Improved Moser--Trudinger type inequalities in the hyperbolic space $\mathbb H^n$}
\author{Van Hoang Nguyen
\footnote{Institute of Research and Development, Duy Tan University, Da Nang, Vietnam}
}

\begin{document}
\maketitle



\renewcommand{\thefootnote}{}

\footnote{Email: 
\href{mail to: Van Hoang Nguyen <vanhoang0610@yahoo.com>}{vanhoang0610@yahoo.com}}

\footnote{2010 \emph{Mathematics Subject Classification\text}: 26D10, 46E35}

\footnote{\emph{Key words and phrases\text}: Moser--Trudinger inequality, exact growth, hyperbolic space, rearrangement, sharp constant}

\renewcommand{\thefootnote}{\arabic{footnote}}
\setcounter{footnote}{0}

\begin{abstract}
We establish an improved version of the Moser--Trudinger inequality in the hyperbolic space $\H^n$, $n\geq 2$. Namely, we prove the following result: for any $0 \leq \lam < \lt(\frac{n-1}n\rt)^n$, then we have
$$
\sup_{\substack{u\in C_0^\infty(\H^n)\\ \int_{\H^n} |\na_g u|_g^n d\Vol_g -\lam \int_{\H^n} |u|^n d\Vol_g \leq 1}} \int_{\H^n} \Phi_n(\al_n |u|^{\frac{n}{n-1}}) d\Vol_g < \infty,
$$
where $\al_n = n \om_{n-1}^{\frac1{n-1}}$, $\om_{n-1}$ denotes the surface area of the unit sphere in $\R^n$ and $\Phi_n(t) = e^t -\sum_{j=0}^{n-2}\frac{t^j}{j!}$. This improves the Moser--Trudinger inequality in hyperbolic spaces obtained recently by Mancini and Sandeep \cite{MS2010}, by Mancini, Sandeep and Tintarev \cite{MST2013} and by Adimurthi and Tintarev \cite{AT} for $\lambda = 0$. In the limiting case $\lam =(\frac{n-1}n)^n$, we prove a Moser--Trudinger inequality with exact growth in $\H^n$,
$$
\sup_{\substack{u\in C_0^\infty(\H^n)\\ \int_{\H^n} |\na_g u|_g^n d\Vol_g -(\frac{n-1}n)^n \int_{\H^n} |u|^n d\Vol_g \leq 1}} \frac{1}{\int_{\H^n} |u|^n d\Vol_g}\int_{\H^n} \frac{\Phi_n(\al_n |u|^{\frac{n}{n-1}})}{(1+ |u|)^{\frac n{n-1}}} d\Vol_g < \infty.
$$
This improves the Moser--Trudinger inequality with exact growth in $\H^n$ established by Lu and Tang \cite{LuTang}. These inequalities are achieved from the comparison of the symmetric non-increasing rearrangement of a function both in the hyperbolic and the Euclidean spaces, and the same inequalities in the Euclidean space. This approach seems to be new comparing with the previous ones.
\end{abstract}

\section{Introduction}
The classical Moser--Trudinger inequality states that for any bounded domain $\Om \subset \R^n$ then
\begin{equation}\label{eq:classicalMT}
\sup_{u \in W^{1,n}_0(\Om), \|\na u\|_n \leq 1} \int_\Om e^{\al_n |u|^{\frac n{n-1}}} dx < \infty,
\end{equation}
where $\al_n = n\om_{n-1}^{\frac1{n-1}}$, $\om_{n-1}$ denotes the surfaces area of the unit sphere in $\R^n$. The inequality \eqref{eq:classicalMT} is sharp in the sense that the supremum will becomes infinity if $\al_n$ is replaced by any constant $\al > \al_n$. It appears as the limiting case of the Sobolev embedding of $W_0^{1,n}(\Om)$ and was proved independently by Yudovi${\rm \check{c}}$ \cite{Y1961}, Poho${\rm \check{z}}$aev \cite{P1965} and Trudinger \cite{T1967}. The sharp form \eqref{eq:classicalMT} and the optimal constant $\al_n$ was found out by Moser \cite{M1970}. In the same work \cite{M1970}, Moser also proved the analogous sharp inequality on the Euclidean sphere with the aim of studying the problem of prescribing the Gaussian curvature on the sphere. The Moser--Trudinger has been generalized to higher order Sobolev spaces by Adams \cite{Adams1988} (nowaday, called Adams inequality) and to Riemannian manifolds \cite{CohnLu3,Li2001,Li2005,Yang,YSK} and sub-Riemannian manifolds \cite{CohnLu,CohnLu2,BMT,LamLu2013,LamTang,Yang12}. 

Another interesting and important question concerning to the Moser--Trudinger inequality \eqref{eq:classicalMT} is whether or not its extremal functions exist. The existence of extremal functions for \eqref{eq:classicalMT} was first proved by Carleson and Chang \cite{CC1986} when $\Om$ is the unit ball in $\R^n$, by Struwe \cite{Struwe} when $\Om$ is close to the unit ball in the sense of measure, by Flucher \cite{Flucher1992} and Lin \cite{Lin1996} when $\Om$ is a general smooth bounded domain, and by Li \cite{Li2001} for compact Riemannian surfaces. See also \cite{deFi} for futher existence results.


The classical Moser--Trudinger inequality \eqref{eq:classicalMT} is strengthened by Tintarev \cite{Tintarev} in the following way. Let $\lam_1(\Om)$ denotes the first non-zero eigenvalue of the Laplace operator in $H^1_0(\Om)$ with $\Om$ being smooth bounded domain in $\R^2$, i.e.,
\[
\lam_1(\Om) = \inf_{u\in H^1_0(\Om) \setminus\{0\}} \frac{\|\na u\|_2^2}{\|u\|_2^2},
\]
then for any $0 \leq \al < \lam_1(\Om)$, the quatity $\|u\|_{1,\al} = \lt(\|\na u\|_2^2 -\al \|u\|_2^2\rt)^{\frac12}$ defines a new norm on $H^1_0(\Om)$ which is equivalent to the Dirichlet norm $\|\na u\|_2$. In \cite{Tintarev}, the following improvement of the classical Moser--Trudinger inequality in dimension $2$ has been established
\begin{equation}\label{eq:Tintarev}
\sup_{u\in H_0^1(\Om), \|u\|_{1,\al} \leq 1} \int_\Om e^{4\pi u^2} dx < \infty.
\end{equation}
In \cite{Yang1}, based on the blow-up analysis method, Yang proved the existence of extremal functions for \eqref{eq:Tintarev} for any $0 \leq \al < \lam_1(\Om)$.

The Moser--Trudinger inequality \eqref{eq:classicalMT} has been extended to unbounded domains $\Om$ of $\R^n$ by Ruf \cite{Ruf} for $n=2$,by Li and Ruf \cite{LiRuf2008} for $n\geq 3$ and by Adimurthi and Yang \cite{AY} for a singular Moser--Trudinger inequality, i.e., they proved that for any $\tau >0$ and $\beta \in [0,n)$
\begin{equation}\label{eq:AY}
C_n(\tau,\beta):= \sup_{u\in W^{1,n}(\R^n): \|\na u\|_n^n + \tau \|u\|_n^n \leq 1} \int_{\R^n} \Phi_n(\al |u|^{\frac n{n-1}}) |x|^{-\beta} dx < \infty
\end{equation}
if and only if $\al \leq \al_n (1 -\frac\be n)$. The existence of extremal functions for \eqref{eq:AY} was proved by Ruf and Li and Ruf \cite{Ruf,LiRuf2008} for the case $\beta=0$ and by Li and Yang \cite{LiYang} for $\be >0$. We refer the reader to the paper of Lam and Lu \cite{LamLu} for another proof of \eqref{eq:AY} without using the rearrangement argument.


In this paper, we dicuss the Moser--Trudinger type inequality on the hyperbolic spaces $\H^n$ that is Riemannian manifolds of sectional curvature $-1$. In the following, we will use the Poincar\'e ball model for the hyperbolic space $\H^n$, i.e., the unit ball $\B^n$ in $\R^n$ equipped with the metric
\[
g(x) = \frac4{(1-|x|^2)^2} \sum_{i=1}^n dx_i^2.
\]
Let $\Vol_g$, $\na_g$ and $|\cdot|_g$ denote the volume element, gradient and scalar product with respect to the metric $g$. For simplicity, we shall use the notation $\|\na_g u\|_{n,g} = \lt(\int_{\B^n} |\na_g u|_g^n d\Vol_g\rt)^{\frac1n}$ and $\|u\|_{n,g} = \lt(\int_{\B^n} |u|^n d\Vol_g\rt)^{\frac1n}$ for any function $u$ on $\H^n$. The Moser--Trudinger inequality in the hyperbolic plane (i.e., $n=2$) has been established  by Mancini and Sandeep \cite{MS2010},
\begin{equation}\label{eq:MSH2}
\sup_{u \in W^{1,2}(\H^2), \|\na_g u\|_{2,g}^2 \leq 1} \int_{\B^2} \lt(e^{4\pi u^2} -1\rt) d\Vol_g < \infty.
\end{equation}
Another proof using the conformal group has been given by Adimurthi and Tintarev \cite{AT}. This idea has been extended to higher dimensions in \cite{Bat,Mancini}. A simple approach to the Moser--Trudinger inequality in the hyperbolic $\H^n$, $n \geq 3$ was given by Mancini, Sandeep and Tintarev \cite{MST2013} based on the radial estimates and decreasing rearrangement arguments. The inequality states that
\begin{equation}\label{eq:MSTHn}
\sup_{u \in W^{1,n}(\H^n), \|\na_g u\|_{n,g}^n \leq 1} \int_{\B^n} \Phi_n(\al_n |u|^{\frac n{n-1}}) d\Vol_g < \infty.
\end{equation}
See also \cite{WY2012,Nguyen2017,Tintarev} for the versions of the Moser--Trudinger inequality with a remainder term related to the metric of the Poincar\'e ball. We refer the reader to \cite{LuYang,NgoNguyen,KS2016} for the higher order extensions of the Moser--Trudinger inequality (i.e., Adams inequality) in hyperbolic space.

The main aim of this paper is to established the improved Moser--Trudinger inequality in hyperbolic space $\H^n$. Our proof also give another proof of the Moser--Trudinger inequality \eqref{eq:MSTHn}. To state our main result, let us recall the Hardy inequality (or Poincar\'e--Sobolev inequality) in $\H^n$
\begin{equation}\label{eq:Hardy}
\int_{\B^n} |\na_g u|_g^n d\Vol_g \geq \lt(\frac{n-1}n\rt)^n \int_{\B^n} |u|^n d\Vol_g,\qquad \forall\, u\in W^{1,n}(\H^n).
\end{equation}
The constant $(\frac{n-1}n)^n$ is sharp and never achieved. This inequality was proved by Mancini and Sandeep \cite{MS08} for $n =2$ and by Mancini, Sandeep and Tintarev \cite{MST2013} for any $n\geq 3$ (see \cite{NgoNguyen1} for more general results). Our first main result of this paper reads as follows

\begin{theorem}\label{Maintheorem}
Let $n\geq 2$. For any $0 \leq \lam < (\frac{n-1}n)^n$, it holds
\begin{equation}\label{eq:Maininequality}
\sup_{u\in W^{1,n}(\H^n), \|\na_g u\|_{n,g}^n -\lam \|u\|_{n,g}^n  \leq 1} \int_{\B^n} \Phi_n(\al_n |u|^{\frac n{n-1}}) d\Vol_g < \infty.
\end{equation}
\end{theorem}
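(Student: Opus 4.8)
The plan is to reduce the inequality \eqref{eq:Maininequality} in $\H^n$ to the Euclidean singular Moser--Trudinger inequality \eqref{eq:AY} (with $\beta$ chosen appropriately in terms of $\lambda$), via a two-step rearrangement comparison. First I would, given $u\in W^{1,n}(\H^n)$ with $\|\na_g u\|_{n,g}^n-\lambda\|u\|_{n,g}^n\le 1$, replace $u$ by its symmetric non-increasing rearrangement $u^*$ in the \emph{hyperbolic} sense; by the P\'olya--Szeg\H{o} principle on $\H^n$ this decreases $\|\na_g u\|_{n,g}$ and, being equimeasurable, leaves $\|u\|_{n,g}$ and the target integral $\int_{\B^n}\Phi_n(\al_n|u|^{n/(n-1)})\,d\Vol_g$ unchanged, so we may assume $u=u(r)$ is radial and non-increasing in the geodesic radius. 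The key computation is then to pass to a Euclidean radial function: writing the hyperbolic integrals in terms of the geodesic distance, one obtains $\int_{\B^n}|\na_g u|_g^n\,d\Vol_g=\om_{n-1}\int_0^\infty|u'(r)|^n\,dr$ (the conformal weight cancels exactly in the $n$-Laplacian energy, which is the reason hyperbolic space is so well suited here), while $\int_{\B^n}|u|^n\,d\Vol_g=\om_{n-1}\int_0^\infty|u(r)|^n(\sinh r)^{n-1}\,dr$ and similarly for the nonlinear term. Define a Euclidean radial function $v$ by matching the gradient energy, e.g. $v(s)=u(r)$ under the change of variables $s^{n-1}\,ds \leftrightarrow (\sinh r)^{n-1}\,dr$ up to constants, or more directly by transplanting $u$ as a function of the Euclidean radius; the goal is a $v\in W^{1,n}(\R^n)$ with $\|\na v\|_n=\|\na_g u\|_{n,g}$ and with $\int_{\R^n}|v|^n\,|x|^{-\beta}\,dx$ controlled below by a constant times $\int_{\B^n}|u|^n\,d\Vol_g$, and $\int_{\R^n}\Phi_n(\al|v|^{n/(n-1)})|x|^{-\beta}\,dx$ controlling from below the hyperbolic target.

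The role of the parameter $\beta$ is the crux of the matching. Since $\sinh r\ge r$ and, near $r=0$, $(\sinh r)^{n-1}\sim r^{n-1}$, while for large $r$, $(\sinh r)^{n-1}\sim 2^{-(n-1)}e^{(n-1)r}$, the exponential growth of the hyperbolic volume density is what produces the Hardy constant $(\frac{n-1}{n})^n$: one has the elementary pointwise bound $(\sinh r)^{n-1}\ge c\, r^{?}\,e^{(n-1)\,r(1-\de)}$-type estimates, and choosing the Euclidean radius $s$ so that $s^{-\beta}$ absorbs the discrepancy forces a relation of the form $\lambda=\al_n^{?}\,\theta$ with $1-\frac{\beta}{n}=(\frac{n-1}{n})(\text{something in }\lambda)$. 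I would calibrate $\beta=\beta(\lambda)\in[0,n)$ precisely so that $0\le\lambda<(\frac{n-1}{n})^n$ corresponds to $\al=\al_n$ remaining $\le\al_n(1-\frac{\beta}{n})$ with equality exactly at the endpoint; then \eqref{eq:AY} applies with this $\beta$ and the finite bound $C_n(\tau,\beta)$ transfers back. The constraint $\|\na v\|_n^n+\tau\|v\|_n^n\le 1$ needed for \eqref{eq:AY} is arranged from $\|\na_g u\|_{n,g}^n-\lambda\|u\|_{n,g}^n\le1$ by using the Hardy inequality \eqref{eq:Hardy} to bound the deficit: since $\lambda<(\frac{n-1}{n})^n$, we have $\|\na_g u\|_{n,g}^n-\lambda\|u\|_{n,g}^n\ge(1-\lambda(\frac{n}{n-1})^n)\|\na_g u\|_{n,g}^n$, giving an a priori bound $\|\na_g u\|_{n,g}\le C(\lambda)$, hence $\|\na v\|_n\le C(\lambda)$, and then a little more work produces the required Euclidean constraint with some $\tau=\tau(\lambda)>0$.

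I expect the main obstacle to be the \textbf{comparison of the nonlinear target integrals under the radius change of variables}: one must show that transplanting $u$ to the Euclidean function $v$ does not lose too much in the Moser--Trudinger functional, i.e. that $\int_{\B^n}\Phi_n(\al_n|u|^{n/(n-1)})\,d\Vol_g\le C+C\int_{\R^n}\Phi_n(\al_n|v|^{n/(n-1)})|x|^{-\beta}\,dx$. This requires splitting the hyperbolic integral into the region $\{u\le 1\}$ (where $\Phi_n(\al_n|u|^{n/(n-1)})\lesssim|u|^n$ since the first surviving term of $\Phi_n$ is of order $n$, and the needed bound follows from the $L^n$ control just established together with the $\beta$-weighted $L^n$ comparison) and the region $\{u>1\}$, where $u$ is large, the geodesic radius $r$ is bounded (because $u$ is non-increasing and $\|u\|_{n,g}$ is finite forces $u\to0$), and the hyperbolic and Euclidean densities are comparable up to the $|x|^{-\beta}$ weight. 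Making the bookkeeping of constants uniform over all admissible $u$ — in particular controlling the measure of $\{u>1\}$ and the behavior near the two ends $r\to0$ and $r\to\infty$ simultaneously — is where the care is needed; everything else is a routine consequence of P\'olya--Szeg\H{o} on $\H^n$, the Hardy inequality, and the known Euclidean result \eqref{eq:AY}.
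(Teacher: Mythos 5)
Your overall strategy---symmetrize, transplant to $\R^n$, and invoke the Euclidean inequality \eqref{eq:AY}---is the right one, but the two load-bearing steps of your plan do not work as described, and the actual key idea is missing. First, the calibration of the singular weight is impossible: \eqref{eq:AY} holds for $\al\le \al_n(1-\frac{\be}{n})$, so keeping the exponent $\al_n$ in the target forces $\be=0$; there is no positive $\be(\lam)$ available to ``absorb the discrepancy'' between the hyperbolic and Euclidean volume densities. Second, your route to the Euclidean constraint $\|\na v\|_n^n+\tau\|v\|_n^n\le 1$ via the Hardy inequality \eqref{eq:Hardy} only yields $\|\na_g u\|_{n,g}^n\le (1-\lam(\tfrac{n}{n-1})^n)^{-1}>1$, which is useless at the critical exponent $\al_n$ (Moser--Trudinger fails for gradient norm strictly above $1$); the ``little more work'' you defer is in fact the entire content of the argument. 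The correct transplantation is not the one matching gradient energies but the equimeasurable one: from the decreasing rearrangement $u^*$ one builds $u^\sharp_g$ on $\H^n$ and $u^\sharp_e(x)=u^*(\si_n|x|^n)$ on $\R^n$ as in \eqref{eq:usharpg}--\eqref{eq:usharpe}. Since $u$, $u^\sharp_g$, $u^\sharp_e$ share the same distribution function, the identity \eqref{eq:equalkey} transfers \emph{all} the nonlinear and $L^n$ integrals with exact equality; the ``main obstacle'' you anticipate (comparing $\Phi_n$-integrals under a change of radius, splitting $\{u\le1\}$ and $\{u>1\}$) simply does not arise.

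What must then be proved---and what your proposal does not supply---is the gradient comparison
\[
\int_{\B^n}|\na_g u^\sharp_g|_g^n\,d\Vol_g-\lt(\frac{n-1}{n}\rt)^n\int_{\B^n}|u^\sharp_g|^n\,d\Vol_g\;\ge\;\int_{\R^n}|\na u^\sharp_e|^n\,dx,
\]
i.e.\ Theorem \ref{Maintheorem1}. This is obtained by writing both gradient integrals as one-dimensional integrals in $v=u^*$, reducing the claim to the pointwise inequality $(\sinh t)^{n(n-1)}-\Phi(t)^{n-1}\ge(\tfrac{n-1}{n})^n\Phi(t)^n$ with $\Phi(t)=n\int_0^t(\sinh s)^{n-1}ds$ (Lemma \ref{lemmakey}), and then converting the resulting term $(n-1)^n\int_0^\infty|v'|^ns^n\,ds$ into $(\tfrac{n-1}{n})^n\int_0^\infty v^n\,ds$ plus a nonnegative remainder via the substitution $w(s)=v(s)s^{1/n}$, the elementary inequality $|a-b|^n\ge|a|^n+|b|^n-nab^{n-1}$, and an integration by parts. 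Once this is in hand, the hypothesis gives $\|\na u^\sharp_e\|_n^n+\tau_\lam\|u^\sharp_e\|_n^n\le1$ with $\tau_\lam=(\tfrac{n-1}{n})^n-\lam>0$, and \eqref{eq:AY} with $\be=0$ and $\tau=\tau_\lam$ concludes. (A small additional caution: your formula $\int_{\B^n}|\na_g u|_g^n\,d\Vol_g=\om_{n-1}\int_0^\infty|u'(r)|^n\,dr$ in the geodesic radius is incorrect; the weight $(\sinh r)^{n-1}$ does not cancel there, and it is precisely the comparison of this weight with its Euclidean counterpart that produces the constant $(\tfrac{n-1}{n})^n$.)
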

Note that Theorem \ref{Maintheorem} contains the Moser--Trudinger inequality \eqref{eq:MSTHn} as a special case corresponding to $\lam =0$. Moreover, it gives an improvement version of the Moser--Trudinger inequality \eqref{eq:MSTHn} in the sense of Tintarev's improvement for the classical Moser--Trudinger inequality (see the inequality \eqref{eq:Tintarev}). Evidently, the constant $\al_n$ in \eqref{eq:Maininequality} is optimal because of the sharpness of the Moser--Trudinger inequality \eqref{eq:MSTHn}. In the limiting case $\lam =(\frac{n-1}n)^n$, the supremum in \eqref{eq:Maininequality} will be infinity (see Conjecture $5.2$ in \cite{MST2013}). In this case, a Hardy--Moser--Trudinger inequality is established in \cite{Nguyen2017} which generalizes the result of Wang and Ye \cite{WY2012} in dimension $2$ to any dimensions (see also \cite{YangZhu} for an improved Hardy--Trudinger--Moser inequality in two dimensional hyperbolic space which improves the inequality of Wang and Ye). Our next aim of this paper is to establish, in this limit case, another kind of Moser--Trudinger inequality (the so-called Moser--Trudinger inequality with exact growth) in the hyperbolic space as follows.

\begin{theorem}\label{Exact}
Let $n \geq 2$. Then the following inequality holds
\begin{equation}\label{eq:MTexact}
\sup_{u\in W^{1,n}(\H^n), \|\na_g u\|_{n,g}^n -(\frac{n-1}n)^n \|u\|_{n,g}^n \leq 1} \frac1{\|u\|_{n,g}^n}\int_{\B^n} \frac{\Phi_n(\al_n |u|^{\frac n{n-1}})}{(1+ |u|)^{\frac n{n-1}}} d\Vol_g < \infty.
\end{equation}
The inequality \eqref{eq:MTexact} is sharp in the sense that if we replace $\al_n$ by any constant $\al > \al_n$ or the power $\frac n{n-1}$ in the denominator by any $p < \frac n{n-1}$ then the supremum will be infinity.
\end{theorem}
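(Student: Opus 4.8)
The plan is to reduce the hyperbolic inequality \eqref{eq:MTexact} to the already-known Moser--Trudinger inequality with exact growth on Euclidean space $\R^n$, via a rearrangement comparison. Recall that the Euclidean exact-growth inequality of Masmoudi--Sani states that there is a constant $C_n$ with
\[
\int_{\R^n} \frac{\Phi_n(\al_n |v|^{\frac n{n-1}})}{(1+|v|)^{\frac n{n-1}}}\,dx \le C_n \|v\|_n^n
\]
for all $v \in W^{1,n}(\R^n)$ with $\|\na v\|_n \le 1$. Given $u \in W^{1,n}(\H^n)$ with $\|\na_g u\|_{n,g}^n - (\tfrac{n-1}n)^n\|u\|_{n,g}^n \le 1$, I would first reduce to the case that $u$ is a non-increasing radial function on the Poincar\'e ball $\B^n$: by the Polya--Szeg\H{o} principle on $\H^n$ the hyperbolic symmetric rearrangement $u^\sharp$ decreases $\|\na_g u\|_{n,g}$, preserves $\|u\|_{n,g}$, and only increases the integral on the left side (since $\Phi_n(\al_n t^{\frac n{n-1}})/(1+t)^{\frac n{n-1}}$ is non-decreasing in $t\ge 0$), so it suffices to bound the functional for radial decreasing $u$.

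The heart of the argument is the two-space comparison already advertised in the abstract. For a radial decreasing $u$ on $\B^n$, I would build a companion radial decreasing function $v$ on $\R^n$ with the same distribution with respect to the \emph{Euclidean} measure — i.e.\ $|\{v > t\}|_{\mathrm{Leb}} = \Vol_g(\{u>t\})$ for all $t$ — so that $\|v\|_n^n = \|u\|_{n,g}^n$ and the two left-hand integrands integrate to the same value. The key estimate is then the gradient comparison
\[
\int_{\R^n} |\na v|^n\,dx \ \le\ \int_{\B^n} |\na_g u|_g^n\,d\Vol_g \ -\ \Big(\tfrac{n-1}n\Big)^n \int_{\B^n} |u|^n\,d\Vol_g,
\]
which should follow exactly as in the proof of Theorem \ref{Maintheorem}: writing both sides in one-dimensional form using the co-area formula and the explicit isoperimetric/volume profile of $\H^n$ versus $\R^n$, the hyperbolic Dirichlet energy dominates the Euclidean one of the equidistributed function by precisely the Hardy deficit $(\tfrac{n-1}n)^n\|u\|_{n,g}^n$. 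I expect this inequality to be established earlier in the paper (in the course of proving Theorem \ref{Maintheorem} and the Hardy inequality \eqref{eq:Hardy}); here I would simply invoke it. With $\|\na v\|_n^n \le 1$ in hand, the Euclidean exact-growth inequality gives
\[
\int_{\B^n}\frac{\Phi_n(\al_n|u|^{\frac n{n-1}})}{(1+|u|)^{\frac n{n-1}}}\,d\Vol_g = \int_{\R^n}\frac{\Phi_n(\al_n|v|^{\frac n{n-1}})}{(1+|v|)^{\frac n{n-1}}}\,dx \le C_n\|v\|_n^n = C_n\|u\|_{n,g}^n,
\]
which is exactly \eqref{eq:MTexact}.

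For the sharpness claim I would test with the standard Moser-type concentrating family, transplanted from $\R^n$ to $\B^n$ near the origin (where the metric $g$ is smooth and comparable to the Euclidean one up to lower-order corrections that do not affect the leading logarithmic term). If $\al_n$ is replaced by $\al>\al_n$, the same family that defeats \eqref{eq:classicalMT} defeats the hyperbolic inequality, since the Hardy deficit $(\tfrac{n-1}n)^n\|u\|_{n,g}^n$ is of lower order for these highly concentrated functions and does not help; if the exponent $\tfrac n{n-1}$ in the denominator is lowered to $p<\tfrac n{n-1}$, one uses the known Euclidean optimality of that exponent (again from Masmoudi--Sani) and transplants the extremizing sequence, checking that the normalization $\|\na_g u\|_{n,g}^n - (\tfrac{n-1}n)^n\|u\|_{n,g}^n \le 1$ is asymptotically the same as $\|\na u\|_n^n \le 1$ for the transplanted functions.

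The main obstacle is the gradient comparison inequality in the displayed form with the \emph{full} Hardy constant $(\tfrac{n-1}n)^n$ subtracted: one must verify that the equidistribution is taken with respect to the correct (Euclidean vs.\ hyperbolic) measures so that the $L^n$ norms match while the Dirichlet energies differ by exactly the sharp Hardy term, and this requires the precise computation of the volume profile of geodesic balls in $\B^n$ and a careful one-dimensional reduction — but this is precisely the technical core that the paper develops for Theorem \ref{Maintheorem}, so for the present theorem it can be quoted rather than reproven. A secondary, more delicate point is that in the exact-growth regime $\lam = (\tfrac{n-1}n)^n$ the functional is not scale-invariant and the bound must be of the form $C_n\|u\|_{n,g}^n$, so one cannot afford any slack in the Hardy step; this is where invoking the sharp Euclidean exact-growth inequality (rather than the ordinary Moser--Trudinger inequality) is essential.
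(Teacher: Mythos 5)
Your proposal follows essentially the same route as the paper: reduce to the symmetric non-increasing rearrangement, pass to the equidistributed Euclidean companion $u^\sharp_e$, invoke the gradient comparison $\int_{\R^n}|\na u^\sharp_e|^n\,dx \le \|\na_g u\|_{n,g}^n - (\tfrac{n-1}{n})^n\|u\|_{n,g}^n$ (the paper's Theorem \ref{Maintheorem1}), and conclude via the Masmoudi--Sani exact-growth inequality \eqref{eq:MTExactRn}; the sharpness is likewise handled in the paper by the concentrating Moser family supported in a geodesic ball, with the explicit computation confirming your expectation that the Hardy correction to the normalization is of lower order $O(k^{-1})$. The argument and its key lemma are the ones the paper uses, so nothing further is needed.
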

The Moser--Trudinger inequality with exact growth in Euclidean space $\R^n$ was proved by Ibrahim, Masmoudi and Nakanishi \cite{IMN} in the plane (i.e, $n=2$) and by Masmoudi and Sani \cite{MSani} for $n\geq 3$. This inequality states that
\begin{equation}\label{eq:MTExactRn}
\sup_{u\in H^{1,n}(\R^n), \|\na u\|_n \leq 1} \frac{1}{\|u\|_n^n} \int_{\R^n} \frac{\Phi_n(\al_n |u|^{\frac{n}{n-1}})}{(1 + |u|)^{\frac n{n-1}}} dx < \infty.
\end{equation}
It was also shown in \cite{MSani} that the inequality \eqref{eq:MTExactRn} is sharp in the sense that if we replace $\al_n$ by any constant $\al > \al_n$ or the power $\frac n{n-1}$ in the denominator by any $p < \frac n{n-1}$ then the supremum will be infinity. This kind of inequality was extended to the hyperbolic spaces by Lu and Tang \cite{LuTang} in the form
\begin{equation}\label{eq:LuTang}
\sup_{u\in W^{1,n}(\H^n), \|\na_g u\|_{n,g} \leq 1} \frac{1}{\|u\|_{n,g}^n} \int_{\B^n} \frac{\Phi_n(\al_n |u|^{\frac n{n-1}})}{(1+ |u|)^{\frac{n}{n-1}}} d\Vol_g < \infty.
\end{equation}
Again, the power $\frac n{n-1}$ in the denominator is sharp. Comparing with the inequality \eqref{eq:LuTang}, our inequality \eqref{eq:MTexact} is stronger than the one of Lu and Tang. We refer the reader to \cite{MSani14,MSani17,LuTangZhu,NgoNguyen,Kamarkar} for the Adams inequality with exact growth both in the Euclidean and hyperbolic spaces.

Let us explain the idea in the proof of Theorem \ref{Maintheorem} and Theorem \ref{Exact}. For any function $u\in W^{1,n}(\H^n)$ we define a function $u^*$ which is non-increasing rearrangement function of $u$ (see the precise definition in Section $2.2$ below). From this function $u^*$ we define two new functions $u^\sharp_g$ on $\H^n$ and $u^\sharp_e$ on $\R^n$ by
\[
u^\sharp_g(x) = u^*(\Vol_g(B_g(0,\rho(x)))),\quad x\in \B^n,
\]
where $ \rho(x) = \ln \frac{1+|x|}{1-|x|}$ denotes the geodesic distance from $x$ to $0$, and $B_g(0,r)$ denotes the open geodesic ball center at $0$ and radius $r>0$ in $\H^n$, and
\[
u^\sharp_e(x) = u^*(\si_n |x|^n), \quad x\in \R^n
\]
where $\si_n$ denotes the volume of unit ball in $\R^n$, respectively. It is remarkable that $u^\sharp_g$ and $u^\sharp_e$ has the same non-increasing rearrangement function (which is $u^*$).
Our main ingredient in the proof of Theorem \ref{Maintheorem} is the relation between $\int_{\B^n} |\na_g u^\sharp_g|^n d\Vol_g$ and $\int_{\R^n} |\na u^\sharp_e|^n dx$ which is the content of the following theorem

\begin{theorem}\label{Maintheorem1}
Let $n \geq 2$. It holds
\begin{equation}\label{eq:mainrelation}
\int_{\B^n} |\na_g u^\sharp_g|^n d\Vol_g - \lt(\frac{n-1}n\rt)^n \int_{\B^n} |u_g^\sharp|^n d\Vol_g \geq \int_{\R^n} |\na u^\sharp_e|^n dx.
\end{equation}
\end{theorem}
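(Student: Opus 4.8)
The key is that both $u^\sharp_g$ and $u^\sharp_e$ are built from the same profile $u^*$, so everything reduces to a one-dimensional computation after changing variables to the "volume parameter." Let me write $t = \Vol_g(B_g(0,\rho))$ for the hyperbolic side and $s = \sigma_n r^n$ for the Euclidean side, and set $\phi(t) = u^*(t)$ (assume $u^*$ is locally Lipschitz, which one may arrange by the usual density/approximation argument). I want to express each of the three integrals in \eqref{eq:mainrelation} as an integral in $t\in(0,\infty)$ against $|\phi'(t)|^n$ or $|\phi(t)|^n$, and then compare the resulting densities pointwise.

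**Carrying it out.**

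First I would recall that in $\H^n$ one has $\Vol_g(B_g(0,\rho)) = \omega_{n-1}\int_0^\rho \sinh^{n-1}(r)\,dr$ and, for a radial function $v(x)=\psi(\rho(x))$, the Dirichlet energy is $\int_{\B^n}|\nabla_g v|_g^n\,d\Vol_g = \omega_{n-1}\int_0^\infty |\psi'(\rho)|^n \sinh^{n-1}(\rho)\,d\rho$, while $\int_{\B^n}|v|^n\,d\Vol_g = \omega_{n-1}\int_0^\infty |\psi(\rho)|^n \sinh^{n-1}(\rho)\,d\rho$. Writing $f_g(\rho) = \omega_{n-1}\sinh^{n-1}(\rho)$ so that $t(\rho)=\int_0^\rho f_g$, and similarly $f_e(r) = \omega_{n-1} r^{n-1} = n\sigma_n r^{n-1}$ with $s(r) = \sigma_n r^n$, the chain rule gives $u^\sharp_g{}'(\rho) = \phi'(t(\rho)) f_g(\rho)$ and likewise on the Euclidean side. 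Substituting and changing variables $\rho\to t$, $r\to s$ (here using that $t$ and $s$ range over the same interval $(0,\infty)$ since $\Vol_g(\H^n)=\infty=|\R^n|$), I get
\begin{align*}
\int_{\B^n} |\nabla_g u^\sharp_g|_g^n\,d\Vol_g &= \int_0^\infty |\phi'(t)|^n\, f_g(\rho(t))^{n-1}\,dt,\\
\int_{\R^n} |\nabla u^\sharp_e|^n\,dx &= \int_0^\infty |\phi'(t)|^n\, f_e(r(t))^{n-1}\,dt,\\
\int_{\B^n} |u^\sharp_g|^n\,d\Vol_g &= \int_0^\infty |\phi(t)|^n\, \frac{dt}{1}\quad\text{(no extra weight, since }d\Vol_g\leftrightarrow dt\text{)}.
\end{align*}
So \eqref{eq:mainrelation} becomes the pointwise-in-$t$ assertion, after integrating against the (nonnegative) quantities $|\phi'|^n$ and $|\phi|^n$ and using that one can reduce to proving it with $\phi$ replaced by suitable one-term test profiles — more robustly, it suffices to show that for every $t>0$,
\[
f_g(\rho(t))^{n-1} \;-\; \lt(\tfrac{n-1}{n}\rt)^n \cdot (\text{something controlling the }|\phi|^n\text{ term}) \;\geq\; f_e(r(t))^{n-1}.
\]
The honest way to handle the mixed $|\phi'|^n$ versus $|\phi|^n$ comparison is to not split it pointwise but to use a one-dimensional Hardy-type inequality: reduce to showing $\int_0^\infty|\phi'|^n g_1\,dt - (\frac{n-1}{n})^n\int_0^\infty|\phi|^n\,dt \geq \int_0^\infty|\phi'|^n g_2\,dt$ where $g_1(t)=f_g(\rho(t))^{n-1}$ and $g_2(t)=f_e(r(t))^{n-1}$, i.e. $\int_0^\infty |\phi'|^n(g_1-g_2)\,dt \geq (\frac{n-1}{n})^n\int_0^\infty|\phi|^n\,dt$. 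Since $g_1 \geq g_2$ (because $\sinh r \geq r$), this is itself a weighted one-dimensional Hardy inequality with weight $g_1-g_2$, and I expect it follows from the known sharp $\H^n$ Hardy inequality \eqref{eq:Hardy} applied to $u^\sharp_g$ combined with the elementary bound $g_2 \le g_1$; more precisely, apply \eqref{eq:Hardy} to $u^\sharp_g$ to get $\int_0^\infty|\phi'|^n g_1\,dt \geq (\frac{n-1}{n})^n\int_0^\infty|\phi|^n\,dt + \int_0^\infty|\phi'|^n g_1\,dt - (\frac{n-1}{n})^n\int_0^\infty|\phi|^n\,dt$, which is a tautology, so I actually need the stronger \emph{improved} one-dimensional Hardy inequality that keeps a remainder term $\int|\phi'|^n(g_1-g_2)$.

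**The main obstacle.**

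The crux, then, is a purely one-dimensional inequality of the form
\[
\int_0^\infty |\phi'(t)|^n\, w(t)\,dt \;\geq\; \lt(\tfrac{n-1}{n}\rt)^n\int_0^\infty |\phi(t)|^n\,dt \;+\; \int_0^\infty |\phi'(t)|^n\,v(t)\,dt
\]
with $w(t)=\omega_{n-1}^{n-1}\sinh^{n-1}(\rho(t))^{\,n-1}$-type weight, $v(t)=\omega_{n-1}^{n-1}r(t)^{(n-1)^2}$-type weight, for all locally Lipschitz $\phi$ decreasing to $0$ at infinity — equivalently $\int_0^\infty|\phi'|^n(w-v) \geq (\frac{n-1}{n})^n\int_0^\infty|\phi|^n$. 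I would prove this by the standard substitution that flattens the hyperbolic Hardy weight: change variables so that $w\,dt$ becomes $dr\cdot(\text{const})$, or directly integrate by parts in $t$ writing $\int|\phi|^n$ via $|\phi(t)|^n = -n\int_t^\infty |\phi|^{n-2}\phi\phi'\,ds$ and then apply Hölder, exactly as in the classical proof of radial Hardy, tracking that the constant that comes out is $(\frac{n-1}{n})^n$ precisely because the hyperbolic volume weight has exponential growth $e^{(n-1)\rho}$. The delicate point — and where I expect to spend real effort — is verifying the weight inequality $w - v \geq c\,(\text{the natural Hardy weight for constant }(\frac{n-1}{n})^n)$ pointwise in $t$, i.e. translating $\sinh r \geq r$ together with the relation between $t$, $\rho(t)$ and $r(t)$ (which comes from equating $\omega_{n-1}\int_0^{\rho}\sinh^{n-1} = \sigma_n r^n$, i.e. $r(t) = (\Vol_g(B_g(0,\rho(t)))/\sigma_n)^{1/n}$) into the exact inequality needed; this requires a careful asymptotic analysis both as $t\to 0^+$ (where $\rho(t)\sim r(t)$ and both sides of \eqref{eq:mainrelation} nearly agree, so the $(\frac{n-1}{n})^n\int|u^\sharp_g|^n$ correction must be shown to be absorbable) and as $t\to\infty$ (where $\sinh^{n-1}\rho \gg r^{n-1}$ and the hyperbolic term dominates comfortably). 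I would isolate this as a separate lemma on the one-variable weights and prove \eqref{eq:mainrelation} as its corollary.
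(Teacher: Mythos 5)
Your reduction is exactly the paper's: pass to the volume variable, write all three integrals as one\--dimensional integrals of $|\phi'|^n$ or $|\phi|^n$ against explicit weights, and observe that \eqref{eq:mainrelation} becomes $\int_0^\infty|\phi'|^n(g_1-g_2)\,dt\ge\lt(\frac{n-1}{n}\rt)^n\int_0^\infty|\phi|^n\,dt$, to be settled by showing that the excess weight $g_1-g_2$ pointwise dominates the weight of a sharp one\--dimensional Hardy inequality. Two slips in the setup: the correct weights are $f_g(\rho(t))^{n}$ and $f_e(r(t))^{n}$, not the $(n-1)$\--st powers (compare with \eqref{eq:euclideangra}); and $g_1\ge g_2$ is \emph{not} a direct consequence of $\sinh r\ge r$, because the hyperbolic radius $\rho(t)$ and the Euclidean radius $r(t)$ enclosing the same volume are different, namely $r(t)=\si_n^{-1/n}\Vol_g(B_g(0,\rho(t)))^{1/n}=\Phi(\rho(t))^{1/n}$ with $\Phi(\rho)=n\int_0^\rho(\sinh s)^{n-1}ds$, so what you actually need is $\Phi(\rho)\le(\sinh\rho)^n$ (which does follow from $\sinh^{n-1}s<\sinh^{n-1}s\cosh s$).

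The genuine gap is the pointwise weight inequality itself, which you correctly identify as the crux but do not prove, and for which the method you sketch would not suffice. In the paper's normalization the required statement (its Lemma 3.4) is
\[
(\sinh \rho)^{n(n-1)}-\Phi(\rho)^{n-1}\;\ge\;\lt(\tfrac{n-1}{n}\rt)^n\Phi(\rho)^n,\qquad \rho\ge0,
\]
which, combined with the classical dual Hardy inequality $\int_0^\infty|\phi'(s)|^ns^n\,ds\ge n^{-n}\int_0^\infty\phi(s)^n\,ds$ for non\--increasing $\phi$ vanishing at infinity (your integration\--by\--parts\--plus\--H\"older step; this part is standard and fine), yields the theorem. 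But both sides of the displayed inequality vanish to high order at $\rho=0$ and the inequality degenerates to an exact identity when $n=2$, so a ``careful asymptotic analysis as $t\to0^+$ and $t\to\infty$'' cannot establish it on all of $(0,\infty)$; a global argument is needed. The paper differentiates twice, reducing the claim to the positivity of an explicit expression in $\sinh\rho$ and $\Phi(\rho)$, which is then verified from the two elementary bounds $\Phi(\rho)<(\sinh\rho)^n$ and $\Phi(\rho)<\frac{n}{n-1}(\sinh\rho)^{n-1}$. Until this lemma is supplied, your proposal is an accurate roadmap of the paper's argument rather than a proof.
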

Theorem \ref{Maintheorem1} combining with \eqref{eq:AY} and \eqref{eq:MTExactRn} would imply Theorem \ref{Maintheorem} and Theorem \ref{Exact}, respectively. It is worthy to emphasize here that Theorem \ref{Maintheorem1} also implies the Hardy inequality (or Poincar\'e--Sobolev inequality) \eqref{eq:Hardy} by the P\'olya--Szeg\"o principle.

The rest of this paper is organized as follows. In Section $2$, we recall some facts of the hyperbolic spaces and the symmetrization of the functions in the hyperbolic spaces. In Section $3$ we give the proof of our main results, i.e., the proof of Theorem \ref{Maintheorem}, Theorem \ref{Exact} and Theorem \ref{Maintheorem1}.

\section{Preliminaries}
\subsection{Background on hyperbolic spaces}
The hyperbolic space $\H^n$, $n\geq 2$ is a complete, simply connected Riemannian manifold having constant sectional curvature equal to $-1$, and for a given dimensional number, any two such spaces are isometries \cite{Wol67}. There is a number of models for $\H^n$, however, the most important models are the half-space model, the Poincar\'e ball model, and the hyperboloid or Lorentz model. In this paper, we will use the Poincar\'e ball model since this model is especially useful for questions involving rotational symmetry. Given $n\geq 2$, we denote by $\B^n$ the open unit ball in $\R^n$ centered at origin. The Poincar\'e ball model of the hyperbolic space $\H^n$ is the unit ball $\B^n$ equipped with the metric
\[
g(x) = \frac4{(1-|x|^2)^2} \sum_{i=1}^n dx_i^2.
\]
The volume element with respect to Riemannian metric is 
\[
d\Vol_g = \frac{2^n}{(1 -|x|^2)^n} dx.
\]
For $x \in \B^n$, denote $\rho(x) =d(x,0) = \ln \frac{1+|x|}{1-|x|}$ the geodesic distance from $x$ to $0$, and for $r >0$ denote $B_g(0,r)$ the open geodesic ball center at $0$ and radius $r$. We still use $\na$ to denote the Euclidean gradient in $\R^n$ as well as $\la\cdot, \cdot\ra$ to denote the standard inner product in $\R^n$. With respect to the metric $g$, the hyperbolic gradient $\na_g$ and the inner product $\la \cdot,\cdot\ra_g$ in each tangent space of $\H^n$ are given by
\[
\na_g = \frac{(1-|x|^2)^2} 4 \na,\qquad \la\cdot,\cdot\ra_g = \frac4{(1-|x|^2)^2} \la\cdot,\cdot\ra,
\]
respectively. For simplicity, we shall use the notation $|\na_g u|_g = \sqrt{\la \na_g u,\na_g u\ra_g}$ for a smooth function $u$ in $\H^n$. With these notation, we have the relation
\begin{equation}\label{eq:relnorm}
\int_{\B^n} |\na_g u|^n_g d\Vol_g = \int_{\B^n} |\na u|^n dx.
\end{equation}
We associate with this form a Sobolev space which is the completion of $C_0^\infty(\B^n)$ with respect the form \eqref{eq:relnorm} above. This space which will be denoted by $W^{1,n}(\B^n)$ is identified on the Poincar\'e ball model as the standard Sobolev space $W^{1,n}_0(\B^n)$ equipped with the norm $\lt(\int_{\B^n} |\na u|^n dx\rt)^{\frac1n}$. By $W^{1,n}_{0,r}(\B^n)$ we denote the subspace of radially symmetric functions of $W^{1,n}_0(\B^n)$.

\subsection{Symmetric decreasing rearrangements}
It is now known that the symmetrization argument works well in the setting of the hyperbolic spaces $\H^n$. Let us recall some facts about the rearrangement in the hyperbolic spaces. Let $u: \H^n \to \R$ be a function such that
\[
\Vol_g(\{x\in \H^n\,:\, |u(x)|> t\}) = \int_{\{x\in \H^n\,:\, |u(x)|> t\}} d\Vol_g < \infty,\quad \forall\, t>0.
\]
For such a function $u$, its distribution function, denoted by $\mu_u$, is defined by
\[
\mu_u(t) = \Vol_g\{x\in \H^n\, :\, |u(x)| > t\}, \qquad t >0.
\]
The function $(0,\infty)\ni t\mapsto \mu_u(t)$ is non-increasing and right-continuous. Then the non-increasing rearrangement function $u^*$ of $u$ is defined by
\[
u^*(t) = \sup\{s >0\, :\, \mu_u(s) > t\}.
\]
Note that the function $(0,\infty) \ni t \to u^*(t)$ is non-increasing. We now define the symmetric non-increasing rearrangement function $u_g^\sharp$ of $u$ by
\begin{equation}\label{eq:usharpg}
u^\sharp_g(x) = u^*(\Vol_g(B_g(0,\rho(x)))),\quad x \in \B^n.
\end{equation}
We also define a function $u^\sharp_e$ on $\R^n$ by
\begin{equation}\label{eq:usharpe}
u^\sharp_e(x) = u^*(\si_n |x|^n),\quad x\in \R^n,
\end{equation}
where $\si_n$ denotes the volume of unit ball in $\R^n$. Since $u$, $u_g^\sharp$ and $u^\sharp_e$ has the same non-increasing rearrangement function (which is $u^*$), then we have
\begin{equation}\label{eq:equalkey}
\int_{\B^n} \Phi(|u|) d\Vol_g = \int_{\B^n} \Phi(u^\sharp_g) d\Vol_g = \int_{\R^n} \Phi(u_e^\sharp) dx = \int_0^\infty \Phi(u^*(t)) dt,
\end{equation}
for any increasing function $\Phi: [0,\infty) \to [0,\infty)$ with $\Phi(0) =0$. This equality is a consequence of layer cake representation. Moreover, by P\'olya--Szeg\"o principle, we have
\begin{equation}\label{eq:PSprinciple}
\int_{\B^n} |\na_g u^\sharp_g|_g^n d\Vol_g \leq \int_{\B^n} |\na_g u|_g^n d\Vol_g.
\end{equation}
We finish this section by recall the polar coordinate formula on $\H^n$: for any function $f:[0,\infty) \to \R$, then it holds
\begin{equation}\label{eq:polar}
\int_{\B^n} f(\rho(x)) d\Vol_g = \om_{n-1} \int_0^\infty f(t) (\sinh t)^{n-1} dt.
\end{equation}


\section{Proof of the main results}
\subsection{Proof of Theorem \ref{Maintheorem1}}
In this section, we give the proof of Theorem \ref{Maintheorem1}. Indeed, we will obtain a stronger result as we will see below. Given a function $u \in W^{1,n}(\H^n)$. We can assume that $u\not\equiv 0$, if not there is nothing to do. Let $u_g^\sharp$ and $u^\sharp_e$ be defined by \eqref{eq:usharpg} and \eqref{eq:usharpe}, respectively. For simplicity, we shall use the notation $v = u^*$.

It is easy to see that
\begin{equation}\label{eq:euclideangra}
\int_{\R^n} |\na u^\sharp_e|^n dx = (n\si_n)^n \int_0^\infty |v'(s)|^n \lt(\frac{s}{\si_n}\rt)^{n-1} ds.
\end{equation}
It is remarked that
\[
\Vol_g(B_g(0,\rho(x))) = n\si_n \int_0^{\rho(x)} (\sinh t)^{n-1} dt,
\]
hence
\begin{equation}\label{eq:gradientvolume}
\na_g \Vol_g(B_g(0,\rho(x))) = n\si_n (\sinh \rho(x))^{n-1} \na_g \rho(x).
\end{equation}
Since $|\na_g \rho(x)|_g =1$, we then have by using polar coordinate formula \eqref{eq:polar}
\begin{align}\label{eq:gra1}
\int_{\B^n} |\na_g u^{\sharp}_g|_g^n d\Vol_g&= (n\si_n)^n \int_{\B^n} |v'(\Vol_g(B_g(0,\rho(x))))|^n (\sinh \rho(x))^{n(n-1)} d\Vol_g\notag\\
&= (n\si_n)^{n+1} \int_0^\infty |v'(\Vol_g(B_g(0,r)))|^n (\sinh r)^{n(n-1)} dr.
\end{align}
Let us define the function
\[
\Phi(r) = n \int_0^r (\sinh t)^{n-1} dt = \frac1{\si_n} \Vol_g(B_g(0,r)).
\]
The function $\Phi: [0,\infty) \to [0,\infty)$ is diffeomorphism and strictly increasing. Making the change of variable $s = \si_n\Phi(r)$ in \eqref{eq:gra1}, we obtain
\begin{equation}\label{eq:gra2}
\int_{\B^n} |\na_g u^{\sharp}_g u|_g^n d\Vol_g = (n\si_n)^n \int_0^\infty |v'(s)|^n \lt(\sinh \Phi^{-1}\lt(\frac s{\si_n}\rt)\rt)^{n(n-1)} ds.
\end{equation}
Let us define
\[
k(s) = (\sinh \Phi^{-1}(s))^{n(n-1)} - s^{n-1},\quad s \geq 0.
\]
We then get from \eqref{eq:euclideangra} and \eqref{eq:gra2} that
\begin{equation}\label{eq:gra3}
\int_{\B^n} |\na_g u^{\sharp}_g u|_g^n d\Vol_g = \int_{\R^n} |\na u^\sharp_e|^n dx + (n \si_n)^n \int_0^\infty |v'(s)|^n k\lt(\frac{s}{\si_n}\rt) ds.
\end{equation}
In order to prove Theorem \ref{Maintheorem1}, we need an estimate from below for the function $k$. In fact, we have the following result
\begin{lemma}\label{lemmakey}
Let $n \geq 2$. It holds
\begin{equation}\label{eq:boundk}
k(\Phi(t)) \geq \lt(\frac{n-1}n\rt)^n \Phi(t)^n, \quad t \geq 0.
\end{equation}
\end{lemma}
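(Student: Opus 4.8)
The plan is to reduce Lemma~\ref{lemmakey} to a one–variable inequality for $\Phi$ and then run a descent by repeated differentiation. By the definition of $k$ one has $k(\Phi(t))=(\sinh t)^{n(n-1)}-\Phi(t)^{n-1}$, and since $\Phi$ is a strictly increasing bijection of $[0,\infty)$ onto $[0,\infty)$, Lemma~\ref{lemmakey} is equivalent to
\begin{equation*}
(\sinh t)^{n(n-1)}\ \ge\ \Phi(t)^{n-1}+\Big(\tfrac{n-1}{n}\Big)^{n}\Phi(t)^{n},\qquad t\ge 0,\tag{$\ast$}
\end{equation*}
or, multiplying by $n^{n}$ and using $\Phi'(t)=n(\sinh t)^{n-1}$, to $\Phi'(t)^{n}\ge n^{n}\Phi(t)^{n-1}+(n-1)^{n}\Phi(t)^{n}$.

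I would first record three elementary facts about $\Phi$, each proved by differentiating and using $\Phi(0)=0$: (a) $(\sinh t)^{n}\ge\Phi(t)$, since $\frac{d}{dt}[(\sinh t)^{n}-\Phi]=n(\sinh t)^{n-1}(\cosh t-1)\ge0$; (b) $\cosh t\,\Phi(t)\ge(\sinh t)^{n}$, since $\frac{d}{dt}[\cosh t\,\Phi-(\sinh t)^{n}]=\sinh t\,\Phi\ge0$ (the other two terms cancel thanks to $\Phi'=n(\sinh t)^{n-1}$); and (c) for $n\ge3$, $\Phi(t)\le\frac{n}{n-1}(\sinh t)^{n-2}\cosh t$, via one integration by parts in $\int_0^t(\sinh s)^{n-1}ds$. (Facts (a)–(b) say $\eta(t):=(\sinh t)^{n}/\Phi(t)\in[1,\cosh t]$, which recasts $(\ast)$ as $\eta(t)^{n-1}\ge1+(\tfrac{n-1}{n})^{n}\Phi(t)$.) The case $n=2$ is then immediate: $\Phi(t)=2(\cosh t-1)$ and $\Phi+\tfrac14\Phi^{2}=(\cosh t-1)(\cosh t+1)=\sinh^{2}t$, so $(\ast)$ is an equality — consistent with the sharpness of the Hardy constant.

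For $n\ge3$ I would put $\Delta(t)=(\sinh t)^{n(n-1)}-\Phi^{n-1}-(\tfrac{n-1}{n})^{n}\Phi^{n}$, note $\Delta(0)=0$, and compute, repeatedly replacing $\cosh^{2}t$ by $1+\sinh^{2}t$ and using $\Phi'=n(\sinh t)^{n-1}$,
\begin{equation*}
\Delta'(t)=n(n-1)(\sinh t)^{n-1}\Big[(\sinh t)^{n(n-2)}\cosh t-\Phi^{n-2}-\Big(\tfrac{n-1}{n}\Big)^{n-1}\Phi^{n-1}\Big].
\end{equation*}
The bracket $\Delta_{1}$ again vanishes at $t=0$, and the same computation applied to it yields $\Delta_{1}'=(\sinh t)^{n-1}\Delta_{2}$ with $\Delta_{2}(0)=0$ and $\Delta_{2}$ of the same type but with the highest power of $\Phi$ reduced by one. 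Iterating $n-2$ more times exhausts the powers of $\Phi$ and leaves an explicitly elementary function of $t$ — for instance $8\sinh t\,e^{-t}$ when $n=3$ and a positive multiple of $\sinh t$ when $n=4$ — which plainly vanishes at $0$ and has nonnegative derivative. Reading the chain backwards gives $\Delta'\ge0$, hence $\Delta\ge0$, hence $(\ast)$.

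The real work, and the place I expect to get stuck first, is making this descent go through uniformly in $n$: at each stage the numerical coefficients must cancel precisely so that $\Delta_{j}(0)=0$ survives, so that $\Delta_{j}'$ factors as $(\sinh t)^{n-1}\Delta_{j+1}$, and so that $\Delta_{j+1}$ keeps the announced shape with one fewer power of $\Phi$. This delicacy is unavoidable because $(\ast)$ is asymptotically an \emph{equality} both as $t\to0^{+}$ and as $t\to\infty$ for every $n$: the relation $\Phi'=n(\sinh t)^{n-1}$ together with facts (a)–(c) must be used essentially exactly, and crude one–shot estimates — e.g. the tangent–line bound $a^{n-1}-b^{n-1}\ge(n-1)b^{n-2}(a-b)$ with $a=(\sinh t)^{n}$, $b=\Phi$, which is sharp as $t\to0$ but far too lossy as $t\to\infty$ where $b\ll a$ — cannot close the gap. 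I expect this inductive coefficient–bookkeeping to be the technical heart of the proof.
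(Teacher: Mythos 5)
Your reduction to $(\ast)$, the $n=2$ equality, the three elementary bounds on $\Phi$, and the first differentiation (your $\Delta'=n(n-1)(\sinh t)^{n-1}\Delta_1$) all coincide with the paper's proof, where $\Delta=F$ and $\Delta_1=G$. But the argument for $n\ge3$ has a genuine gap at exactly the point you flag: the proposed descent of length $n-1$, in which each $\Delta_j'$ is supposed to factor as $(\sinh t)^{n-1}\Delta_{j+1}$ with $\Delta_{j+1}$ ``of the same type with one fewer power of $\Phi$,'' is not carried out and does not in fact preserve that shape. Already at the second step one finds
\begin{equation*}
G'(t)=(n-1)^2(\sinh t)^{n-1}\Big[(\sinh t)^{(n-1)(n-2)}+\tfrac{n(n-2)}{(n-1)^2}\big((\sinh t)^{n(n-3)}-\Phi^{\,n-3}\big)-\big(\tfrac{n-1}{n}\big)^{n-2}\Phi^{\,n-2}\Big],
\end{equation*}
a bracket containing \emph{two} distinct pure powers of $\sinh t$ and two powers of $\Phi$ with coefficients that no longer mirror the original pattern; further differentiation multiplies the number of terms rather than cleanly stripping one power of $\Phi$. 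Verifying two low-dimensional cases ($n=3,4$) does not substitute for the uniform-in-$n$ bookkeeping, which you yourself identify as the place you expect to get stuck.

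The missing idea is that the descent can be stopped right there: the bracket above is already nonnegative by your own facts (a) and the bound $\Phi<\tfrac{n}{n-1}(\sinh t)^{n-1}$ (a slightly sharper form of your (c)). Indeed (a) gives $\Phi^{\,n-3}\le(\sinh t)^{n(n-3)}$, killing the middle group, while $\big(\tfrac{n-1}{n}\big)^{n-2}\Phi^{\,n-2}<(\sinh t)^{(n-1)(n-2)}$ kills the last term against the first, so the bracket is positive for $t>0$; integrating back twice gives $G>0$, then $F>0$. Your closing heuristic --- that one-shot estimates on $\Phi$ are ``far too lossy'' because $(\ast)$ is asymptotically an equality --- is what leads you astray: the tightness lives at the level of $F$ itself, but after two differentiations the constants line up so that these crude bounds close the gap exactly. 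As written, the proposal is an incomplete sketch of a harder route; to repair it, replace the iteration from the second stage onward by the direct positivity argument just described.
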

\begin{proof}
Note that $k(\Phi(t)) = (\sinh t)^{n(n-1)} - \Phi(t)^{n-1}$. So, \eqref{eq:boundk} is equivalent to
\begin{equation}\label{eq:equivform}
F(t) = (\sinh t)^{n(n-1)} - \Phi(t)^{n-1} - \lt(\frac{n-1}n\rt)^n \Phi(t)^n \geq 0,\quad t \geq 0.
\end{equation}

If $n =2$, then we easily compute that $\Phi(t) = 2(\cosh t -1)$. Therefore, we have
\[
F(t) = (\sinh t)^2 -2(\cosh t-1) -(\cosh t-1)^2=0,
\]
here we use the equality $(\cosh t)^2 - (\sinh t)^2 =1$. Thus, \eqref{eq:boundk} is an equality if $n=2$.

Suppose $n\geq 3$, differentiating the function $F$, we get
\begin{align*}
F'(t) &= n(n-1) \lt(\sinh t\rt)^{n(n-1) -1} \cosh t - n(n-1) (\sinh t)^{n-1} \Phi(t)^{n-2} \\
&\qquad - \lt(\frac{n-1}n\rt)^n n^2 (\sinh t)^{n-1} \Phi(t)^{n-1}\\
&=n(n-1) (\sinh t)^{n-1}\lt(\lt(\sinh t\rt)^{n(n-2)} \cosh t -\Phi(t)^{n-2} -\lt(\frac{n-1}n\rt)^{n-1} \Phi(t)^{n-1}\rt)\\
&=: n(n-1) (\sinh t)^{n-1} G(t).
\end{align*}
We next differentiate the function $G$ to obtain
\begin{align*}
G'(t) &= n(n-2)\lt(\sinh t\rt)^{n(n-2)-1} (\cosh t)^2 + \lt(\sinh t\rt)^{(n-1)^2} -n(n-2) (\sinh t)^{n-1} \Phi(t)^{n-3} \\
&\quad - \lt(\frac{n-1}n\rt)^{n-1} n(n-1) (\sinh t)^{n-1} \Phi(t)^{n-2}.
\end{align*}
Using the equality $(\cosh t)^2 = 1+ (\sinh t)^2$, we simplify the expression of $G'$ by
\begin{align*}
G'(t) &=  (n-1)^2 \lt(\sinh t\rt)^{(n-1)^2} + n(n-2)\lt(\sinh t\rt)^{n(n-2)-1} -n(n-2) (\sinh t)^{n-1} \Phi(t)^{n-3} \\
&\quad - \lt(\frac{n-1}n\rt)^{n-1} n(n-1) (\sinh t)^{n-1} \Phi(t)^{n-2}\\
&=(n-1)^2 (\sinh t)^{n-1}\Bigg[(\sinh t)^{(n-1)(n-2)} + \frac{n(n-2)}{(n-1)^2}\lt((\sinh t)^{n(n-3)} -\Phi(t)^{n-3}\rt)\\
&\hspace{6cm} - \lt(\frac{n-1}n\rt)^{n-2}\Phi(t)^{n-2}\Bigg]\\
&=(n-1)^2 (\sinh t)^{n-1} H(t).
\end{align*}
Easy estimates show that
\[
\Phi(t) = n \int_0^t (\sinh s)^{n-1} ds < n \int_0^t (\sinh s)^{n-1} \cosh s ds= (\sinh t)^n,\qquad t >0,
\]
and
\[
\Phi(t) = n \int_0^t (\sinh s)^{n-1} ds < n \int_0^t (\sinh s)^{n-2} \cosh s ds = \frac n{n-1} (\sinh t)^{n-1},\qquad t >0.
\]
Using these estimates for $\Phi$, we get $H(t) >0$ for $t>0$ which is equivalent to $G'(t) >0$ for $t>0$. Consequently, $G(t) > G(0) =0$ for any $t>0$ which is equivalent to $F'(t) >0$ for any $t >0$. Hence, we get $F(t) > F(0) =0$ for any $t>0$. This finishes our proof.
\end{proof}

It follows from \eqref{eq:gra3} and Lemma \ref{lemmakey} that
\begin{equation}\label{eq:equalnormn}
\int_{\H^n} |\na_g u^\sharp_g(x)|^n d\Vol_g \geq \int_{\R^n} |\na u^\sharp_e|^n dx + (n-1)^n \int_0^\infty |v'(s)|^n s^n ds.
\end{equation}
We next make a change of function $w(s) = v(s) s^{\frac1n}$ or equivalently $v(s) = w(s) s^{-\frac1n}$. Differentiating the function $v$ implies
\[
v'(s) = w'(s) s^{-\frac1n} - \frac1n w(s) s^{-\frac1n -1}.
\]
Note that $v'(s) \leq 0$ since $v =u^*$ is a non-increasing function. It then is easy to verify that
\[
|a -b|^n \geq |a|^n + |b|^n - n a b^{n-1}
\]
with $a -b \leq 0$ and $b \geq 0$. Hence
\begin{align*}
\int_0^\infty |v'(s)|^n s^n ds &\geq \int_0^\infty |w'(s)|^n s^{n-1} ds + \frac1{n^n}\int_0^\infty w(s)^n s^{-1} ds\\
&\qquad - \int_0^\infty w'(s) s^{-\frac1n} w(s)^{n-1}s^{-\frac{n^2-1}n} s^n ds\\
&= \int_0^\infty |w'(s)|^n s^{n-1} ds + \frac1{n^n}\int_0^\infty v(s)^n ds,
\end{align*}
the equality follows by intgeration by parts. Combining the previous estimates together with \eqref{eq:equalnormn}, we obtain
\begin{multline}\label{eq:onHn}
\int_{\H^n} |\na_g u^\sharp_g|^n d\Vol_g -\lt(\frac{n-1}n\rt)^n \int_{\H^n} |u_g^\sharp|^n d\Vol_g\\
\geq \int_{\R^n} |\na u^\sharp_e|^n dx + (n-1)^n \int_0^\infty |(v(s) s^{\frac1n})'|^n s^{n-1} ds.
\end{multline}
The estimate \eqref{eq:onHn} finishes the proof of Theorem \ref{Maintheorem1}. Indeed, it is even stronger than the statement of Theorem \ref{Maintheorem1}, and we believe that it could give a proof of the Poincar\'e--Sobolev inequality in \cite[Lemma $2.1$, part (b)]{MST2013}.

\subsection{Proof of Theorem \ref{Maintheorem}}
In this section, we give the proof of Theorem \ref{Maintheorem}. Suppose $0 \leq \lam < (\frac{n-1}n)^n$, denote
\[
\tau_\lam = \lt(\frac{n-1}n\rt)^n - \lam >0.
\]
Let $u \in W^{1,n}(\H^n)$ be a function with 
\[
\int_{\B^n} |\na_g u|_g^n d\Vol_g - \lam \int_{\B^n} |u|^n d\Vol_g \leq 1.
\]
We define two new functions $u_g^\sharp$ and $u^\sharp_e$ by \eqref{eq:usharpg} and \eqref{eq:usharpe} respectively. By P\'olya--Szeg\"o principle \eqref{eq:PSprinciple} and the equality \eqref{eq:equalkey}, we have
\[
\int_{\B^n} |\na_g u_g^\sharp|_g^n d\Vol_g - \lam \int_{\B^n} |u^\sharp_g|^n d\Vol_g \leq \int_{\B^n} |\na_g u|_g^n d\Vol_g - \lam \int_{\B^n} |u|^n d\Vol_g \leq 1.
\]
Theorem \ref{Maintheorem1} and the equality \eqref{eq:equalkey} imply
\[
\int_{\B^n} |\na_g u_g^\sharp|_g^n d\Vol_g - \lam \int_{\B^n} |u^\sharp_g|^n d\Vol_g \geq \int_{\R^n} |\na u^\sharp_e|^n dx + \tau_\lam \int_{\R^n} |u_e^\sharp|^n dx.
\]
Combining these two estimates together, we arrive
\[
\int_{\R^n} |\na u^\sharp_e|^n dx + \tau_\lam \int_{\R^n} |u_e^\sharp|^n dx \leq 1.
\]
As a consequence of the previous estimate, the Moser--Trudinger inequality \eqref{eq:AY} and the equality \eqref{eq:equalkey}, we get
\begin{align*}
\int_{\B^n} \Phi_n(\al_n |u|^{\frac n{n-1}}) d\Vol_g &= \int_{\R^n} \Phi_n(\al_n |u^\sharp_e|^{\frac n{n-1}}) dx\\
&\leq \sup_{v \in W^{1,n}(\R^n), \|\na v\|_n^n + \tau_\lam \|v\|_n^n \leq 1}\int_{\R^n} \Phi_n(\al_n |v|^{\frac n{n-1}}) dx\\
&=C_n(\tau_\lam,0).
\end{align*}
Taking the supremum over such functions $u$, we get
\[
\sup_{\substack{u\in W^{1,n}(\H^n)\\ \|\na_g u\|_{n,g}^n  - \lam\|u\|_{n,g}^n \leq 1}} \int_{\B^n} \Phi_n(\al_n |u|^{\frac n{n-1}}) d\Vol_g \leq C_n(\tau_\lam,0) < \infty.
\] 
This finishes the proof of Theorem \ref{Maintheorem}.

We conclude this subsection by giving a lower bound for
\[
\sup_{\substack{u\in W^{1,n}(\H^n), \|\na_g u\|_{n,g}^n  - \lam\|u\|_{n,g}^n \leq 1}} \int_{\B^n} \Phi_n(\al_n |u|^{\frac n{n-1}}) d\Vol_g.
\]
\begin{proposition}
Let $n \geq 2$. It holds
\begin{equation}\label{eq:lowerbound}
\sup_{\substack{u\in W^{1,n}(\H^n)\\ \|\na_g u\|_{n,g}^n  - \lam\|u\|_{n,g}^n \leq 1}} \int_{\B^n} \Phi_n(\al_n |u|^{\frac n{n-1}}) d\Vol_g \geq \frac{\al_n^{n-1}}{(n-1)!} \lt(\lt(\frac{n-1}n\rt)^n -\lam\rt)^{-1}.
\end{equation}
\end{proposition}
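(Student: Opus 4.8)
The plan is to split the estimate into a one-line pointwise bound and an appeal to the already-cited sharpness of the Hardy inequality \eqref{eq:Hardy}. From the definition $\Phi_n(t)=e^t-\sum_{j=0}^{n-2}\frac{t^j}{j!}=\sum_{j\ge n-1}\frac{t^j}{j!}$ we get $\Phi_n(t)\ge \frac{t^{\,n-1}}{(n-1)!}$ for every $t\ge 0$; applied with $t=\al_n|u|^{\frac n{n-1}}$ this yields, for any $u$ in the admissible class of \eqref{eq:lowerbound},
\[
\int_{\B^n}\Phi_n\!\lt(\al_n|u|^{\frac n{n-1}}\rt) d\Vol_g\ \ge\ \frac{\al_n^{\,n-1}}{(n-1)!}\int_{\B^n}|u|^n\,d\Vol_g=\frac{\al_n^{\,n-1}}{(n-1)!}\,\|u\|_{n,g}^n .
\]
Thus it is enough to exhibit admissible functions $u$ whose $L^n$-norm satisfies $\|u\|_{n,g}^n$ arbitrarily close to $\tau_\lam^{-1}=\lt(\lt(\frac{n-1}n\rt)^n-\lam\rt)^{-1}$.

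To produce them I would use that the constant $\lt(\frac{n-1}n\rt)^n$ in \eqref{eq:Hardy} is optimal, so there is a sequence $u_k\in W^{1,n}(\H^n)\setminus\{0\}$ with $R_k:=\|\na_g u_k\|_{n,g}^n/\|u_k\|_{n,g}^n\to\lt(\frac{n-1}n\rt)^n$. Concretely one can take radial functions: let $u_k(x)=\psi_k(\rho(x))$, where $\psi_k$ equals $e^{-\frac{n-1}n\rho}$ for $\rho\le k$ and is cut off (say affinely) to $0$ on $[k,k+1]$. Since $|\psi_k'(\rho)|^n=\lt(\frac{n-1}n\rt)^n e^{-(n-1)\rho}$ on $[0,k]$, the polar coordinate formula \eqref{eq:polar} shows that on that range the integrands of numerator and denominator coincide up to the factor $\lt(\frac{n-1}n\rt)^n$, and both principal contributions grow linearly in $k$ (because $e^{-(n-1)\rho}(\sinh\rho)^{n-1}\to 2^{-(n-1)}$), while the contribution of the annulus $k\le\rho\le k+1$ stays bounded; hence $R_k\to\lt(\frac{n-1}n\rt)^n$. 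Each $u_k$ is Lipschitz with compact support in $\B^n$, hence lies in $W^{1,n}(\H^n)$.

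Now, since $\lam<\lt(\frac{n-1}n\rt)^n$, for $k$ large we have $R_k-\lam>0$; replacing $u_k$ by $c_k u_k$ with $c_k=(\|\na_g u_k\|_{n,g}^n-\lam\|u_k\|_{n,g}^n)^{-1/n}>0$ leaves $R_k$ unchanged and arranges $\|\na_g u_k\|_{n,g}^n-\lam\|u_k\|_{n,g}^n=1$, so $u_k$ is admissible. This normalization gives $\|u_k\|_{n,g}^n(R_k-\lam)=1$, hence $\|u_k\|_{n,g}^n=(R_k-\lam)^{-1}\to\tau_\lam^{-1}$. Feeding this into the first display and taking the supremum over admissible functions,
\[
\sup_{\substack{u\in W^{1,n}(\H^n)\\ \|\na_g u\|_{n,g}^n-\lam\|u\|_{n,g}^n\le 1}}\int_{\B^n}\Phi_n\!\lt(\al_n|u|^{\frac n{n-1}}\rt) d\Vol_g\ \ge\ \lim_{k\to\infty}\frac{\al_n^{\,n-1}}{(n-1)!}\|u_k\|_{n,g}^n=\frac{\al_n^{\,n-1}}{(n-1)!}\,\tau_\lam^{-1},
\]
which is exactly \eqref{eq:lowerbound}.

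I do not expect a genuine obstacle: the only substantive ingredient is the sharpness of \eqref{eq:Hardy} (its non-attainment is irrelevant here), and the rest is the elementary bound $\Phi_n(t)\ge t^{n-1}/(n-1)!$ together with a rescaling. The one place where care is needed, if one prefers a self-contained argument rather than quoting sharpness as a black box, is checking that the explicit radial near-extremizers above have Rayleigh quotient tending to $\lt(\frac{n-1}n\rt)^n$, which is a direct computation with \eqref{eq:polar} and the asymptotics $(\sinh\rho)^{n-1}\sim 2^{-(n-1)}e^{(n-1)\rho}$ as $\rho\to\infty$.
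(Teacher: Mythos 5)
Your proof is correct and follows essentially the same route as the paper: the elementary bound $\Phi_n(t)\ge t^{n-1}/(n-1)!$ reduces everything to exhibiting admissible functions, normalized by $\|\na_g u\|_{n,g}^n-\lam\|u\|_{n,g}^n=1$, whose $L^n$-norm tends to $\lt(\lt(\frac{n-1}{n}\rt)^n-\lam\rt)^{-1}$, which is exactly the paper's strategy. The only difference is the choice of near-extremizers for \eqref{eq:Hardy}: the paper takes $\psi_k(x)=(1-|x|^2)^{\frac{n-1}{n}+\frac{1}{nk}}$ and computes with Beta functions, while you take truncated exponentials $e^{-\frac{n-1}{n}\rho}$ in geodesic polar coordinates; both are standard extremizing sequences and either computation goes through.
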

In the case $\lam =0$, the lower bound \eqref{eq:lowerbound} was proved in \cite[Lemma $4.1$]{MST2013}. The similar bound in the Euclidean space can be found in \cite{IW}.
\begin{proof}
We use the functions $\psi_k$ constructed in \cite{MST2013}. For $k \geq 1$, let us define the function $\psi_k$ by
\[
\psi_k(x) = (1 -|x|^2)^{\frac{n-1}n + \frac1{nk}}.
\]
We can easily check that
\[
\int_{\B^n} \psi_k^n d\Vol_g = \om_{n-1} 2^n \int_0^1 (1-r^2)^{-1 + \frac1k} r^{n-1} dr = \om_{n-1} 2^{n-1} B\lt(\frac1k, \frac n2\rt),
\]
where $B(a,b)$ denotes the usual beta function and 
\begin{align*}
\int_{\B^n} |\na_g \psi_k|_g^n d\Vol_g &= \om_{n-1}\lt(\frac{n-1}n + \frac1{nk}\rt)^n 2^n \int_0^1 (1-r^2)^{-1 + \frac1k} r^{2n-1} dr\\
&= \om_{n-1}2^{n-1}\lt(\frac{n-1}n + \frac1{nk}\rt)^n  B\lt(\frac1k, n\rt).
\end{align*}
We choose $a_k$ such that $\|\na_g(a_k \psi_k)\|_{n,g}^n -\lam \|a_k\psi_k\|_{n,g}^n =1$, i.e.,
\[
a_k^n = \om_{n-1}^{-1} 2^{1-n}\lt(\lt(\frac{n-1}n + \frac1{nk}\rt)^n  B\lt(\frac1k, n\rt) - \lam B\lt(\frac1k, \frac n2\rt)\rt)^{-1}.
\]
Hence
\[
a_k^n \int_{\B^n} |\psi_k|^n d\Vol_g = \lt(\lt(\frac{n-1}n + \frac1{nk}\rt)^n \frac{ B\lt(\frac1k, n\rt)}{B\lt(\frac1k, \frac n2\rt)} - \lam\rt)^{-1}.
\]
We can readily check that
\[
\lim_{k\to \infty} \frac{ B\lt(\frac1k, n\rt)}{B\lt(\frac1k, \frac n2\rt)} =1,
\]
which yields
\[
\lim_{k\to \infty} a_k^n \int_{\B^n} |\psi_k|^n d\Vol_g = \lt(\lt(\frac{n-1}n\rt)^n  - \lam\rt)^{-1}.
\]
By the simple estimate $\Phi_n(t) \geq \frac{t^{n-1}}{(n-1)!}$ for $t\geq 0$, we obtain
\begin{align*}
\sup_{\substack{u\in W^{1,n}(\H^n)\\ \|\na_g u\|_{n,g}^n  - \lam\|u\|_{n,g}^n \leq 1}} \int_{\B^n} \Phi_n(\al_n |u|^{\frac n{n-1}}) d\Vol_g&\geq \liminf_{k\to \infty}\int_{\B^n} \Phi_n(\al_n |a_k\psi_k|^{\frac n{n-1}}) d\Vol_g\\
&\geq \frac{\al_n^{n-1}}{(n-1)!} \lim_{k\to \infty} a_k^n \int_{\B^n} |\psi_k|^n d\Vol_g\\
&= \frac{\al_n^{n-1}}{(n-1)!} \lt(\lt(\frac{n-1}n\rt)^n -\lam\rt)^{-1}
\end{align*}
as wanted.
\end{proof}


\subsection{Proof of Theorem \ref{Exact}}
This section is devoted to prove Theorem \ref{Exact}. The proof follows the lines in the proof of Theorem \ref{Maintheorem}. Let $u \in W^{1,n}(\H^n)$ be a function with 
\[
\int_{\B^n} |\na_g u|_g^n d\Vol_g - \lt(\frac{n-1}n\rt)^n \int_{\B^n} |u|^n d\Vol_g \leq 1.
\]
We define two new functions $u_g^\sharp$ and $u^\sharp_e$ by \eqref{eq:usharpg} and \eqref{eq:usharpe} respectively. By P\'olya--Szeg\"o principle \eqref{eq:PSprinciple} and the equality \eqref{eq:equalkey}, we have
\[
\int_{\B^n} |\na_g u_g^\sharp|_g^n d\Vol_g - \lt(\frac{n-1}n\rt)^n \int_{\B^n} |u^\sharp_g|^n d\Vol_g \leq \int_{\B^n} |\na_g u|_g^n d\Vol_g - \lt(\frac{n-1}n\rt)^n \int_{\B^n} |u|^n d\Vol_g \leq 1.
\]
Theorem \ref{Maintheorem1} and the equality \eqref{eq:equalkey} imply
\[
\int_{\B^n} |\na_g u_g^\sharp|_g^n d\Vol_g - \lt(\frac{n-1}n\rt)^n \int_{\B^n} |u^\sharp_g|^n d\Vol_g \geq \int_{\R^n} |\na u^\sharp_e|^n dx.
\]
Combining these two estimates together, we arrive
\[
\int_{\R^n} |\na u^\sharp_e|^n dx \leq 1.
\]
As a consequence of the previous estimate and the equality \eqref{eq:equalkey}, we get
\begin{align*}
\frac1{\|u\|_{n,g}^n}\int_{\B^n} \frac{\Phi_n(\al_n |u|^{\frac n{n-1}})}{(1+ |u|)^{\frac{n}{n-1}}} d\Vol_g &= \frac1{\|u^\sharp_e\|_n^n}\int_{\R^n} \frac{\Phi_n(\al_n |u^\sharp_e|^{\frac n{n-1}})}{(1 + |u_e^\sharp|)^{\frac n{n-1}}} dx\\
&\leq \sup_{v \in W^{1,n}(\R^n), \|\na v\|_n \leq 1}\frac1{\|v\|_n^n}\int_{\R^n} \frac{\Phi_n(\al_n |v|^{\frac n{n-1}})}{(1 + |v|)^{\frac n{n-1}}} dx.
\end{align*}
Taking the supremum over such functions $u$, we get
\[
\sup_{\substack{u\in W^{1,n}(\H^n)\\ \|\na_g u\|_{n,g}^n - (\frac{n-1}n)^n \|u\|_{n,g}^n \leq 1}} \frac1{\|u\|_{n,g}^n}\int_{\B^n} \frac{\Phi_n(\al_n |u|^{\frac n{n-1}})}{(1+ |u|)^{\frac{n}{n-1}}} d\Vol_g \leq \sup_{\substack{v \in W^{1,n}(\R^n)\\ \|\na v\|_n \leq 1}}\frac1{\|v\|_n^n}\int_{\R^n} \frac{\Phi_n(\al_n |v|^{\frac n{n-1}})}{(1 + |v|)^{\frac n{n-1}}} dx.
\] 
The right hand side is finite by the Moser--Trudinger inequality with exact growth \eqref{eq:MTExactRn}. This proves \eqref{eq:MTexact}.

It remains to check the sharpness of \eqref{eq:MTexact}. In order to do this, we need construct a sequence $\{u_k\}$ in $W^{1,n}(\H^n)$ such that $\|\na_g u_k\|_{n,g}^n - (\frac{n-1}n)^n \|u_k\|_{n,g}^n =1$ and
\begin{equation}\label{eq:example}
\lim_{k\to \infty}\frac1{\|u_k\|_{n,g}^n}\int_{\B^n} \frac{\Phi_n(\al |u_k|^{\frac n{n-1}})}{(1+ |u_k|)^{p}} d\Vol_g =\infty.
\end{equation}
if $\al > \al_n$ or $p < \frac n{n-1}$. Let us define the sequence $\{u_k\}_k$ as follows
\[
u_k(x) = \om_{n-1}^{-\frac1n} C_k \times
\begin{cases}
k^{\frac{n-1}n} &\mbox{if $0\leq \rho(x) < e^{-k}$}\\
k^{\frac{n-1}n} \frac{-\ln \rho(x)} k&\mbox{if $e^{-k} \leq \rho(x) < 1$}\\
0&\mbox{if $\rho(x) \geq 1$,}
\end{cases}
\]
where $C_k$ is chosen such that
\[
\|\na_g u_k\|_{n,g}^n - \lt(\frac{n-1}n\rt)^n \|u_k\|_{n,g}^n =1.
\]
Therefore, a straightforward computation shows that
\begin{align*}
C_k &=\Bigg(\frac1k \int_{e^{-k}}^1 t^{-n} (\sinh t)^{n-1} dt - \lt(\frac{n-1}n\rt)^nk^{n-1} \int_0^{e^{-k}} (\sinh t)^{n-1}dt\\
&\hspace{5cm} -\lt(\frac{n-1}n\rt)^n k^{-1} \int_{e^{-k}}^1 (-\ln t)^n (\sinh t)^{n-1} dt\Bigg)^{-\frac1n}.
\end{align*}
It is easy to check that that
\[
k^{-1} \int_{e^{-k}}^1 (-\ln t)^n (\sinh t)^{n-1} dt = O(k^{-1}),
\]
\[
k^{n-1} \int_0^{e^{-k}} (\sinh t)^{n-1}dt = O(k^{n-1} e^{-nk}),
\]
and
\[
\frac1k \int_{e^{-k}}^1 t^{-n} (\sinh t)^{n-1} dt = 1+ \frac1k \int_{e^{-k}}^1\frac1t \lt(\lt(\frac{\sinh t}t\rt)^{n-1} -1\rt) dt = 1 + O(k^{-1}).
\]
Consequently, we get $C_k = (1 + O(k^{-1}))^{-\frac1n}$ and hence $C_k^{\frac n{n-1}} k = k + O(1)$. The estimates above shows that $\|u_k\|_{n,g}^n = O(k^{-1})$. Hence
\begin{align*}
\frac1{\|u_k\|_{n,g}^n}\int_{\B^n} \frac{\Phi_n(\al |u_k|^{\frac n{n-1}})}{(1+ |u_k|)^{p}} d\Vol_g& \geq \frac C{\|u_k\|_{n,g}^n}\int_{B_g(0,e^{-k})} \frac{\Phi_n(\al |u_k|^{\frac n{n-1}})}{(1+ |u_k|^{\frac n{n-1}})^{\frac{p(n-1)}n}} d\Vol_g\\
&\geq \frac{C k \Phi_n(\al\om_{n-1}^{-\frac1{n-1}} C_k^{\frac n{n-1}} k) }{ (1 + \om_{n-1}^{-\frac1{n-1}} C_k^{\frac n{n-1}} k)^{\frac {pn}{n-1}}} \int_0^{e^{-k}} (\sinh t)^{n-1} dt\\
&\geq C k^{1-\frac{p(n-1)}{n}} e^{-nk} \Phi_n(\frac{\al}{\al_n} n k + O(1))\\
&\geq C k^{1-\frac{p(n-1)}{n}} e^{nk(\frac{\al}{\al_n} -1)},
\end{align*}
here we use $C_k^{\frac n{n-1}} k = k + O(1)$, and $C$ denotes a constant which does not depend on $k$ and which value can be changed in each lines. Consequently, we get
\[
\lim_{k\to \infty} \frac1{\|u_k\|_{n,g}^n}\int_{\B^n} \frac{\Phi_n(\al |u_k|^{\frac n{n-1}})}{(1+ |u_k|)^{p}} d\Vol_g = \infty,
\]
if $\al > \al_n$ and for any $p$, or $\al = \al_n$ and for any $p < \frac n{n-1}$. This finishes the proof of Theorem \ref{Exact}.

\section*{Acknowledgments}
The author woul like to thank the anonymous referee for his/her useful comment and suggestion which improve the presentation of this paper.

\end{document}